\newtheorem{theorem}{Theorem}[section]
\newtheorem{lemma}[theorem]{Lemma}
\theoremstyle{definition}
\newtheorem{definition}[theorem]{Definition}
\theoremstyle{definition}
\theoremstyle{remark}
\newtheorem{remark}[theorem]{Remark}
\author[A. Anand]{Akash Anand} 
\address{Akash Anand, Department of
  Mathematics and Statistics, Indian Institute of Technology, Kanpur, UP 208016}
\email{akasha@iitk.ac.in}
\begin{document}

\title[A Fourier continuation framework]{A Fourier continuation framework \\ for high-order approximations 
}

\begin{abstract}
It is well known that approximation of functions on $[0,1]$ whose periodic extension is not continuous fail to converge uniformly due to rapid Gibbs oscillations near the boundary. Among several approaches that have been proposed toward the resolution of Gibbs phenomenon in recent years, 
a Fourier continuation (FC) based approximation scheme has been suggested by Bruno and collaborators in the context of certain PDE solvers where approximation grids used are equispaced. While the practical efficacy of FC based schemes in obtaining a high-order numerical solution of PDEs is well known, theoretical convergence analyses largely remain unavailable. The primary objective of this paper is to take a step in this direction where we analyze the convergence rates of a Fourier continuation framework for approximations based on discrete functional data coming from equispaced grids. In this context, we explore a certain two-point Hermite interpolation strategy for constructing Fourier continuations that, not only simplifies the implementation of such approximations but also makes possible a rigorous analysis of its numerical properties. In particular, we show that the approximations converge with order $r+1$ for functions coming from a subspace of $C^{r,1}([0,1])$, the space of $r$-times continuously differentiable function whose $r$th derivative is Lipschitz continuous. We also demonstrate that theoretical rates are indeed achieved in practice, through a variety of numerical experiments.
\end{abstract}

\subjclass[2010]{65D15, 42A10, 41A25}
\keywords{Fourier continuation, high-order approximation, Gibb's phenomenon}



\maketitle

\section{Introduction}

Given a function $f \in C^{r,1}([0,1])$, the space of $r$-times continuously differentiable real valued functions whose $r$th derivative is Lipschitz continuous, we seek to approximate it using a trigonometric polynomial
of the form
\begin{align} \label{trigpoly}
\sum_{k=-n}^{n} c_k(f) e^{2\pi i k x/b}
\end{align}
with $c_{-k} = \overline{c_k}$ for some $b \ge 1$. 
In particular, we are interested in approximations 
obtained as a truncated Fourier series, that we denote by $\mathcal{T}_{n,b}(f)$.
It is well known that if $f$ satisfies $f(0) = f(1)$, then the trigonometric polynomial obtained by truncating its Fourier series (with $b=1$) converges uniformly. In fact, if all its derivatives up to order $r$ satisfy 
\begin{align} \label{eq:per_conds}
f^{(\ell)}(0) = f^{(\ell)}(1) \text{\ for\ } 0 \le \ell \le r, 
\end{align}
then 
the approximation errors converge according to:
\[
\Vert f - \mathcal{T}_{n,1}(f) \Vert_{\infty,[0,1]} = \max_{x \in [0,1]} |f(x) - \mathcal{T}_{n,1}(f)(x)| = \mathcal{O}\left( \frac{\log n}{n^{r+1}} \right).
\]
If $ f \in C^{r}([0,1]) \cap C^{r+2}_{pw}([0,1])$, a subspace of $C^{r,1}([0,1])$, and additionally satisfeis \cref{eq:per_conds}, then the rate of convergence improves further to $\mathcal{O}(n^{-(r+1)})$. Note that $f \in C^{\ell}_{pw}([a,b])$ if and only if $f^{(\ell)} \in C_{pw}([a,b])$, the space of piecewise continuous functions on $[a,b]$. Recall that a function $g \in C_{pw}([a,b])$ if there are finitely many, say $n_d$, open disjoint intervals $(a_j,a_{j+1})$ with $a_0 = a$ and $a_{n_d} = b$, such that $g |_{(a_j,a_{j+1})}$ extends as a continuous function to $[a_j,a_{j+1}]$. For simplicity, we denote the space $C^{r}([a,b]) \cap C^{r+2}_{pw}([a,b])$ by $D^{r,1}([a,b])$ and its subspace where functions additionally satisfy 
\[
f^{(\ell)}(a) = f^{(\ell)}(b),\ \ \ 0 \le \ell \le r, 
\]
by $D^{r,1}_0([a,b])$.
%
%
%
For example, $f_0(x) = |x-1/2|$ is in $D_0^{0,1}([0,1])$, $f_1(x) = (x-1/2)|x-1/2|$ is in $D^{1,1}([0,1])$ and $f_2(x) = (x-1/2)^2|x-1/2|$ is in $D^{2,1}([0,1])$. 

\begin{figure}[t]
  \centering
  \begin{subfigure}[$\mathcal{T}_{16,1}(f)$]
    {\includegraphics[width = 0.485\textwidth]{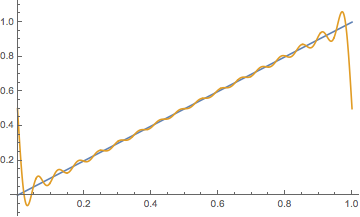}}
  \end{subfigure}%
  ~
  \begin{subfigure}[$\mathcal{T}_{32,1}(f)$]
    {\includegraphics[width = 0.485\textwidth]{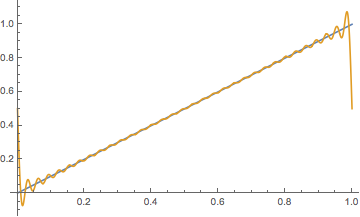}}
  \end{subfigure}%
  \caption{
Approximations $\mathcal{T}_{n,1}(f)$ of $f(x) = x$ obtained as truncated Fourier series exhibit Gibb's phenomenon.
  }
  \label{fig:gibbs}
\end{figure}

The Fourier series approximations, on the other hand, fail to converge uniformly when $f(0) \ne f(1)$ due to rapid oscillations near boundary, known as the {\em Gibb's phenomenon} \cite{Wilbraham:1848aa,Gibbs:1898aa,Gibbs:1899aa,Hewitt:1979aa} (see \cref{fig:gibbs} for an example) --- development of effective strategies for its alleviation has remained a subject matter of much ongoing research.

Several approximation approaches have been proposed to overcome the difficulty of Gibb's oscillations. These include schemes that utilize Fourier or physical space filters \cite{Gottlieb:1997aa} as well as those that project the partial Fourier sums onto suitable functional spaces. For example, the Gegenbauer projection technique \cite{Gottlieb:1992aa, Gottlieb:1994aa, Gottlieb:1996aa, Gottlieb:1995aa, Gottlieb:1995ab, Gottlieb:1997aa} utilizes a space spanned by Gegenbauer polynomials. In Fourier-Pad\'{e} approximations, partial Fourier sums are approximated by rational trigonometric functions \cite{Geer:1995aa,Geer:1997aa,Driscoll:2001aa}. Techniques based on extrapolation algorithms \cite{Brezinski:2004aa} have also been used.
Several Fourier continuation (or extension) approaches have also been proposed that seek to find a trigonometric polynomial of the form (\ref{trigpoly}) with $b > 1$. Such schemes rely on smoothly continuing $f$ on $[0,1]$ to $f_c$ on $[0,b]$ or $[1-b,1]$ for a suitable choice of $b$ in such a way that $f_c \equiv f$ on $[0,1]$ and $f^{(\ell)}(0) = f^{(\ell)}(b)$ for all integers $0 \le \ell \le r$ for some $r > 0$. 
Once such an $f_c$ has been produced, the restriction of its truncated Fourier series to $[0,1]$ serves as an approximation to $f$ (see Figure \ref{fig:fc_approx}). 
For some examples where Fourier extension ideas have been used and discussed in various contexts, see \cite{Averbuch:1997aa, Garbey:1998aa, Garbey:2000aa,Boyd:2002aa, Potts:2004aa, Huybrechs:2010aa}. More recently,
an algorithmic construction for Fourier continuation has been suggested by Bruno and Lyon in \cite{Bruno:2010aa} in the context of certain PDE solvers where approximation grids used are equispaced. While the efficacy of this approach has been established through its application in various partial differential equation solvers (for example, see \cite{Lyon:2010aa,Albin:2011aa, Amlani:2016aa}), owing to the algorithmic nature of this scheme, numerical analysis of these methods, to a large extent, is intractable and consequently, theoretical convergence rates remain unavailable.


The primary objective of this paper is to
analyze a Fourier continuation framework for eliminating Gibbs oscillations from approximations based on discrete functional data coming from equispaced grids. Indeed, a Fourier continuation based high-order approximation becomes most relevant when the underlying grid is uniform, allowing for efficient calculations using FFT. 
In this context, we explore the construction based on the Hermite interpolation, that not only simplifies implementation of the approximation but also makes possible a rigorous analysis of its numerical properties. Toward this, in \cref{sec:framework}, we discuss the general framework for constructing such continuations where we also present theoretical convergence rates for approximations of functions in $D^{r,1}([0,1])$ by corresponding truncated Fourier series arising out of the continuation framework. In \cref{sec:polycont}, we review the Hermite polynomial based Fourier continuation approach and show that this construction indeed falls within the general framework of \cref{sec:framework}. We thus conclude that the theoretical convergence rates obtained remain valid in the context of Hermite polynomial based scheme. We then numerically verify this, in the discrete setting, in \cref{sec:numexp}, through a variety of computational experiments.

\section{A framework for Fourier continuation analysis} \label{sec:framework}

\begin{figure}[t]
  \centering
  \begin{subfigure}[$r=1$]
    {\includegraphics[width = 0.485\textwidth]{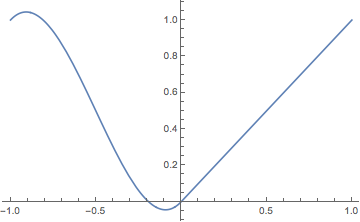}}
  \end{subfigure}%
  ~
  \begin{subfigure}[$r = 2$]
    {\includegraphics[width = 0.485\textwidth]{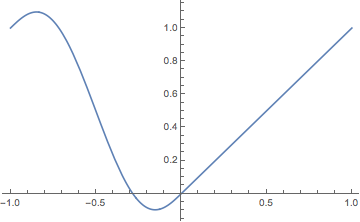}}
  \end{subfigure}%
    \begin{subfigure}[$r = 15$]
    {\includegraphics[width = 0.485\textwidth]{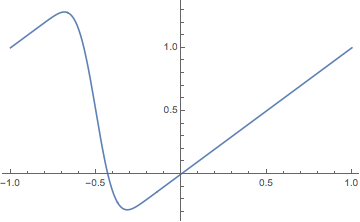}}
  \end{subfigure}%
  ~
  \begin{subfigure}[$r=100$]
    {\includegraphics[width = 0.485\textwidth]{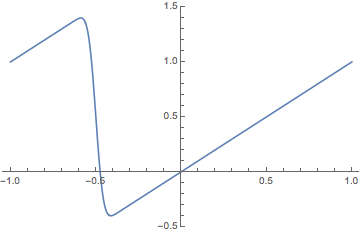}}
  \end{subfigure}%
  \caption{
Continuation $f_c(x)$ of $f(x) = x$ on $[0,1]$ to $[-1,1]$ using a polynomial of degree $2r+1$ so that all derivatives up to order $r$ are continuous and $f_c^{(\ell)}(-1) = f_c^{(\ell)}(1)$ for $0 \le \ell \le r$.
  }
  \label{fig:continuation}
\end{figure}

As described above, the Fourier continuation framework for approximation of a function $f \in D^{r,1}([0,1])$ can be viewed as a two step procedure, namely,
\begin{enumerate}
\item {\bf continuation:}
for a given $b > 1$, construct a function $f_c : [1-b,1] \to \mathbb{R}$ such that the following conditions hold:
\begin{align}
& f_c(x) = f(x), \ \text{ for all } x \in [0,1],  \label{fc_1}\\
& f_c(x) \in D_0^{r,1}([1-b,1]).   \label{fc_2}
\end{align}
We illustrate this step in \cref{fig:continuation} for $b =2$ where we continue $f(x) = x$ on $[0,1]$ to the interval $[-1,1]$ with varying degree of smoothness as controlled by $r$. The explicit construction used in these examples are discussed in \cref{sec:polycont}.
\item {\bf Fourier approximation:}
for an $n \in \mathbb{N}$ and $x \in [0,1]$,
\[
f(x) \approx \mathcal{T}_{n,b}(f)(x) =  \sum_{k=-n}^n c_k(f_c) e^{2\pi i k x/b},
\]
where
\[
c_k(f_c) = \frac{1}{b} \int_{1-b}^1 f_c(x) e^{-2\pi i k x/b}\,dx.
\]
\end{enumerate}
In \cref{fig:fc_approx}, we show three Fourier approximations that correspond to $n = 2, 4$ and $8$ to a continuation of $f(x)=x$ shown in \cref{fig:continuation} where the continued function is in $C^{15,1}([-1,1])$ (in fact, in $D_0^{15,1}([-1,1])$). Note the absence of Gibbs oscillations in these approximations in contrast to those shown in \cref{fig:gibbs}.
\begin{figure}[t]
  \centering
  \begin{subfigure}[$n = 2$]
    {\includegraphics[width = 0.315\textwidth]{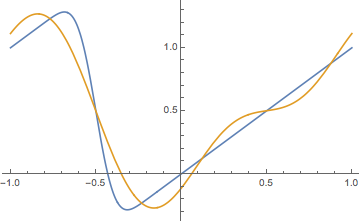}}
  \end{subfigure}%
  ~
    \begin{subfigure}[$n = 4$]
    {\includegraphics[width = 0.315\textwidth]{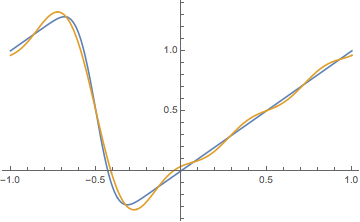}}
  \end{subfigure}%
 ~
  \begin{subfigure}[$n=8$]
    {\includegraphics[width = 0.315\textwidth]{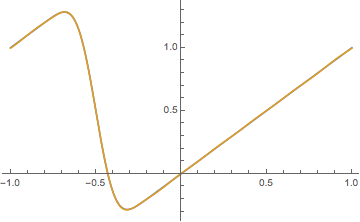}}
  \end{subfigure}%
 \begin{subfigure}[$n = 2$]
    {\includegraphics[width = 0.315\textwidth]{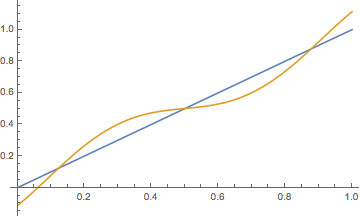}}
  \end{subfigure}%
  ~
    \begin{subfigure}[$n=4$]
    {\includegraphics[width = 0.315\textwidth]{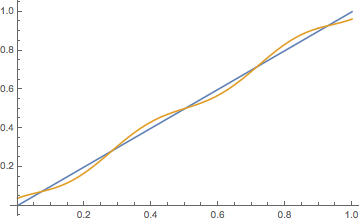}}
  \end{subfigure}%
  ~
  \begin{subfigure}[$n=8$]
    {\includegraphics[width = 0.315\textwidth]{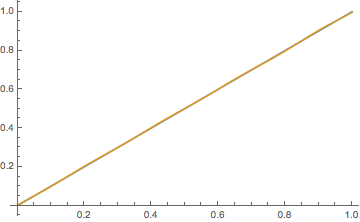}}
  \end{subfigure}%
  \caption{
Approximation to $f(x) = x$ on $[0,1]$ by truncated Fourier series of a continued function in $D_0^{15,1}([-1,1])$. The top row shows the approximations on $[-1,1]$ whereas the bottom rows depict the corresponding approximation on $[0,1]$.}
  \label{fig:fc_approx}
\end{figure}

We analyze the approximation properties of the above strategy under the assumption that
$f_c$ is of the form
\begin{equation} \label{fc_gen}
f_c(x) = 
\begin{cases}
f(x), & x \in [0,1], \\
\mathcal{L}_r(F)(x), & x \in [1-b,0),
\end{cases}
\end{equation}
with 
\begin{equation}
F  = 
\begin{bmatrix} \label{f_matrix}
f(0) & f^{(1)}(0) & \cdots & f^{(\ell)}(0) & \cdots & f^{(r)}(0)  \\
f(1) & f^{(1)}(1) & \cdots & f^{(\ell)}(1) & \cdots & f^{(r)}(1)
\end{bmatrix}
\end{equation}
and a bounded linear operator $\mathcal{L}_r : M_{2,r+1}(\mathbb{R}) \to D^{r,1}([1-b,0])$, where $M_{m,n}(\mathbb{R})$ denotes the normed linear space of all $m \times n$ real matrices with the norm given by
\begin{equation}
\Vert A \Vert_{max} = \max_{jk} |a_{jk}|.
\end{equation}
Moreover, the linear operator $\mathcal{L}_r$ is required to satisfy the {\em derivative conditions} that read
\begin{align} 
(\mathcal{L}_r(F))^{(\ell)}(0) &= f^{(\ell)}(0) \label{deriv_cond_1} \\
(\mathcal{L}_r(F))^{(\ell)}(1-b) &= f^{(\ell)}(1) \label{deriv_cond_2}
\end{align}
for $0 \le \ell \le r$. 
Note that $\mathcal{L}_r(F)(x)$ assumes the form
\begin{align} \label{L_expanded}
\mathcal{L}_r(F)(x) = \sum_{m=0}^r L_m^0(x) f^{(m)}(0) + \sum_{m=0}^r L_m^1(x) f^{(m)}(1)
\end{align}
for some functions $L_{m}^0, L_{m}^1 \in C^{r,1}([1-b,0])$, where the derivative conditions \cref{deriv_cond_1}-\cref{deriv_cond_2} require that they satisfy
\begin{align}
(L_{m}^0)^{(\ell)}(0) = \delta_{\ell m}, (L_{m}^0)^{(\ell)}(1-b) = 0, \label{deriv_cond_11} \\
(L_{m}^1)^{(\ell)}(0) = 0, (L_{m}^1)^{(\ell)}(1-b) = \delta_{\ell m} \label{deriv_cond_22},
\end{align}
for $0 \le \ell \le r$. 

\begin{remark}
While in our discussions, we extend $f$ to the left of the interval $[0,1]$, that is, to the interval $[1-b,1]$, a similar right continuation, that is, to the interval $[0,b]$ also works analogously.
\end{remark}

\begin{lemma} \label{lemma-thm}
If $g \in D^{r,1}_0([1-b,1])$, then, for all $n > 0$, we have
\[
\Vert \mathcal{T}_{n,b}(g)-g \Vert_{\infty,[0,1]} \le \frac{C}{n^{r+1}},
\]
for a positive constant $C$ independent of $n$.
\end{lemma}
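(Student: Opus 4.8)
The plan is to reduce the lemma to the classical quantitative estimate on the rate of uniform convergence of Fourier partial sums for a periodic function of finitely limited smoothness. First I would rescale the problem to the standard period: with the affine substitution $x = (1-b) + \tfrac{b}{2\pi}t$, put $h(t) = g\bigl((1-b) + \tfrac{b}{2\pi}t\bigr)$ and extend $h$ $2\pi$-periodically to $\mathbb{R}$. Since $g \in D^{r,1}_0([1-b,1])$, the endpoint conditions $g^{(\ell)}(1-b) = g^{(\ell)}(1)$, $0 \le \ell \le r$, make this periodic extension lie in $C^r(\mathbb{R})$, and $g \in C^{r+2}_{pw}([1-b,1])$ makes $h^{(r+1)}$ exist off a finite set, be piecewise $C^1$, and hence be of bounded variation on $[0,2\pi]$; its only discontinuities are finitely many jumps, at the images of the breakpoints of $g$ and possibly at the period endpoint (where only derivatives up to order $r$ are required to match). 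A short computation shows that under this substitution $\mathcal{T}_{n,b}(g)$ becomes exactly the $n$-th symmetric Fourier partial sum $S_n h$ of $h$, the linear phase factors from the shift cancelling, so it suffices to prove $\Vert S_n h - h\Vert_{\infty,[0,2\pi]} \le C\, n^{-(r+1)}$ with $C$ independent of $n$.

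Next I would estimate the coefficients $c_k(h)$. Integrating by parts $r$ times and using the continuity and periodicity of $h, h', \dots, h^{(r-1)}$, so that all boundary terms cancel, gives $c_k(h) = (ik)^{-r} c_k(h^{(r)})$; one more integration by parts, valid because $h^{(r)}$ is Lipschitz and periodic, yields
\[
c_k(h) = \frac{1}{(ik)^{r+1}}\, c_k\!\bigl(h^{(r+1)}\bigr), \qquad k \neq 0 .
\]
Because $h^{(r+1)}$ has bounded variation on $[0,2\pi]$, a Riemann--Stieltjes integration by parts bounds $|c_k(h^{(r+1)})|$ by $\tfrac{1}{2\pi|k|}\bigl(\operatorname{Var}_{[0,2\pi]} h^{(r+1)} + \lvert h^{(r+1)}(2\pi^-) - h^{(r+1)}(0^+)\rvert\bigr)$, and therefore $|c_k(h)| \le C_1 |k|^{-(r+2)}$ for every $k \neq 0$.

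Finally, since $h$ is continuous and, by the last bound, its Fourier series converges absolutely, $h = \sum_{k \in \mathbb{Z}} c_k(h)\, e^{ikt}$ uniformly, so
\[
\bigl\Vert S_n h - h \bigr\Vert_{\infty,[0,2\pi]} \le \sum_{|k|>n} \lvert c_k(h)\rvert \le 2 C_1 \sum_{k=n+1}^{\infty} k^{-(r+2)} \le 2 C_1 \int_n^{\infty} x^{-(r+2)}\,dx = \frac{2 C_1}{(r+1)\, n^{r+1}} .
\]
Undoing the substitution, and noting that the sub-interval of $[0,2\pi]$ corresponding to $[0,1]$ is contained in $[0,2\pi]$, gives $\Vert \mathcal{T}_{n,b}(g) - g \Vert_{\infty,[0,1]} \le 2C_1 (r+1)^{-1} n^{-(r+1)}$, as claimed.

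I do not expect a genuine obstacle here: the argument is a quantitative version of the standard fact that a $2\pi$-periodic $C^r$ function whose $(r+1)$st derivative has bounded variation has Fourier partial sums converging uniformly at the rate $n^{-(r+1)}$. The points requiring care are bookkeeping ones — confirming that membership in $D^{r,1}_0$ gives precisely the claimed regularity of the periodic extension (so the only jumps are in $h^{(r+1)}$), that the finitely many breakpoints contribute a finite total variation, and that the rescaling introduces only $b$- and $r$-dependent constants. It is also worth recording why the argument needs the $C^{r+2}_{pw}$ regularity packed into $D^{r,1}_0$: for a general $f \in C^{r,1}$ one has only $|c_k| = o(|k|^{-(r+1)})$, and recovering the sharp rate then requires combining Jackson's theorem with the $O(\log n)$ growth of the Lebesgue constants, which accounts for the extra logarithmic factor in that case.
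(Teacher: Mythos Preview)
Your proposal is correct and follows essentially the same route as the paper's proof: both integrate by parts $r+1$ times using the periodic boundary conditions, then exploit the piecewise $C^1$ structure of the $(r+1)$st derivative to gain one further power of $|k|^{-1}$ in the coefficient decay, and finally sum the tail $\sum_{|k|>n}|c_k|$. The only differences are cosmetic: the paper works directly on $[1-b,1]$ rather than rescaling to $[0,2\pi]$, and it writes the last step as an explicit piecewise integration by parts (jump contributions of $g^{(r+1)}$ plus the integral of $g^{(r+2)}$) instead of phrasing it as a bounded-variation / Riemann--Stieltjes estimate.
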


\begin{proof}
Note that the Fourier coefficients $c_k(g)$, for $k \ne 0$, upon $(r+1)$ integration by parts, are given by
\begin{align*}
c_k(g) 
= \frac{1}{b}\left(\frac{b}{2\pi i k}\right)^{r+1}  \int_{1-b}^1 g^{(r+1)}(x)e^{-2\pi i k x/b}\,dx.
\end{align*}
As $g^{(r+1)}$ is piecewise differentiable with finitely many jump discontinuties,  say at $a_j, \ j = 0, 1, \ldots, n_d$, with $a_j < a_{j+1}$, $a_0 = 1-b$ and $a_{n_d} = 1$, an application of integration by parts to each of these subintervals $[a_j,a_{j+1}]$ yields
\begin{align}
c_k(g) 
&= \frac{1}{b}\left(\frac{b}{2\pi i k}\right)^{r+2} \sum_{j=0}^{n_d} \left( g^{(r+1)}(a_{j}+) - g^{(r+1)}(a_j-)\right)e^{-2\pi i k a_j/b} \nonumber \\
&+ \frac{1}{b}\left(\frac{b}{2\pi i k}\right)^{r+2}  \int_{1-b}^1 g^{(r+2)}(x)e^{-2\pi i k x/b}\,dx  \label{eq:fourcoefbound}
\end{align}
where $f(a\pm)$ denotes $\lim_{h\to +0}f(a \pm h)$. The left hand limit at $x=1-b$ and right hand limit at $x=1$, respectively, are obtained as $g^{(r+1)}(a_{n_0}-) = g^{(r+1)}(a_{n_d}-)$ and $g^{(r+1)}(a_{n_d}+) = g^{(r+1)}(a_{0}+)$ .
The result now follows from \cref{eq:fourcoefbound} and the following inequality:
\[
\Vert \mathcal{T}_{n,b}(g)-g \Vert_{\infty,[0,1]} \le \Vert \mathcal{T}_{n,b}(g)-g  \Vert_{\infty,[1-b,1]} \le  \sum_{|k| \ge n+1} \left|  c_k(g)\right|.
\] 
\end{proof}

Clearly, as constructed in \cref{fc_gen}, the continuation $f_c \in D_0^{r,1}([1-b,1])$, and therefore, its truncated Fourier series converges according to the rate obtained in \cref{lemma-thm}.
This construction, of course, assumes that the boundary data matrix $F$ is available exactly, as might be the case in many applications. However, in many other cases, especially when approximations are being constructed from a discrete functional data, complete boundary information may not available explicitly and are obtained indirectly using numerical approximations. Consequently, the matrix $F$ used in the continuation process may be inexact, which in turn, introduces additional inaccuracies in the Fourier continuation approximations. To study the effect of inexact data matrix on errors, we begin by denoting the approximate continuation
\begin{equation} \label{fc_gen_approx}
\hat{f}_c(x) = 
\begin{cases}
f(x), & x \in [0,1], \\
\mathcal{L}_r(\hat{F})(x), & x \in [1-b,0),
\end{cases}
\end{equation}
corresponding to $\hat{F} \in M_{2,r+1}(\mathbb{R})$, $\hat{F} \ne F$.
We note that $\hat{f}_c$ as defined in \cref{fc_gen_approx}, typically, is not in $C^{r,1}([1-b,1])$ and, in fact, can by discontinuous if 
the first column of $\hat{F}$ differs from that of $F$.
Obviously, the exact knowledge of one or more columns of $F$ has favorable impact on the regularity of $\hat{f}_c$. For example,
in a typical discrete setting, while the inexact derivative calculations result in $\hat{F}$ to carry numerical error, availability of boundary data $f(0)$ and $f(1)$ makes it possible to choose $\hat{F}$ so that its first column matches exactly with that of $F$ thus making $\hat{f}_c$ continuous at $x = 0$ and satisfy $\hat{f}_c(1-b) = \hat{f}_c(1)$. 
To formalize this idea of having such partially known boundary data and study its impact on the approximation accuracy, we introduce the following definition. 
\begin{definition}
A boundary data matrix $\hat{F} = (\hat{f}_{jk})_{0 \le j \le 1,0 \le k \le r}$ is $r$-exact (or simply exact) with respect to $f$ if $\hat{f}_{jk} = f^{(k)}(j), j \in \{0,1\},0 \le k \le r$; for $0 \le s < r$,
a matrix $\hat{F}$ is $s$-exact with respect to $f$ if the first $(s+1)$~columns of $\hat{F}$ agrees exactly with those of $F$ but they differ in $(s+2)$th column, that is,
\[
\hat{f}_{jk} = f^{(k)}(j), j \in \{0,1\},0 \le k \le s \text{ and } \hat{f}_{j(s+1)} \ne f^{(s+1)}(j) \text{ for } j = 0 \text{ or } j=1.
\]
\end{definition}
It is straightforward to see that if $\hat{F}$ is $s$-exact with respect to $f$ then for $\ell = 0, \ldots, s$, $\hat{f}_c^{(\ell)}$ is continuous at $x = 0$ and $\hat{f}_c^{(\ell)}(1-b) = \hat{f}_c^{(\ell)}(1)$. 

\begin{lemma} \label{approx_regularity}
Let $f \in C^{r,1}([0,1])$ and $f_c$ be its continuation as given in \cref{fc_gen}. If the approximate continuation $\hat{f}_c$ given in \cref{fc_gen_approx} corresponds to
$\hat{F} \in M_{2,r+1}(\mathbb{R})$ that is $s$-exact with respect to $f$, $0 \le s \le r$, then, $\hat{f}_c \in C_0^{s,1}([1-b,1])$.
\end{lemma}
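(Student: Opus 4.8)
The plan is to analyze the regularity of $\hat f_c$ by splitting the interval $[1-b,1]$ at the junction point $x=0$ and checking smoothness there, together with the periodicity-type matching at the two endpoints $x=1-b$ and $x=1$. On the open interval $(0,1)$ the function $\hat f_c$ coincides with $f\in C^{r,1}([0,1])$, hence is $C^{r,1}$ there; on $(1-b,0)$ it equals $\mathcal{L}_r(\hat F)$, and since $\mathcal{L}_r$ maps into $D^{r,1}([1-b,0])\subseteq C^{r,1}([1-b,0])$ by hypothesis, $\hat f_c$ is $C^{r,1}$ on that piece as well. So the only places where regularity can drop below $C^{r,1}$ are the junction $x=0$ and the identification of the endpoints $x=1-b$ and $x=1$. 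This reduces the proof to a finite check on one-sided derivatives at three points.

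Next I would use the derivative conditions \cref{deriv_cond_1}--\cref{deriv_cond_2}, which hold for $\mathcal{L}_r$ applied to \emph{any} input matrix (they are linear conditions on $\mathcal{L}_r$, independent of the data), together with the expansion \cref{L_expanded} and the normalizations \cref{deriv_cond_11}--\cref{deriv_cond_22}. Applying these to $\hat F=(\hat f_{jk})$ gives, for $0\le\ell\le r$,
\[
\bigl(\mathcal{L}_r(\hat F)\bigr)^{(\ell)}(0) = \hat f_{0\ell}, \qquad \bigl(\mathcal{L}_r(\hat F)\bigr)^{(\ell)}(1-b) = \hat f_{1\ell}.
\]
At the junction $x=0$, the right-hand $\ell$th derivative of $\hat f_c$ is $f^{(\ell)}(0)$ while the left-hand one is $\hat f_{0\ell}$; these agree precisely when $\ell\le s$, by $s$-exactness of $\hat F$. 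Hence $\hat f_c^{(\ell)}$ is continuous at $0$ for $0\le\ell\le s$. At the endpoints, the right-hand $\ell$th derivative at $x=1-b$ is $\hat f_{1\ell}$ and the left-hand $\ell$th derivative at $x=1$ is $f^{(\ell)}(1)$; again $s$-exactness gives $\hat f_{1\ell}=f^{(\ell)}(1)$ for $0\le\ell\le s$, so the endpoint matching conditions defining the subscript $0$ in $C_0^{s,1}$ hold up to order $s$.

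Finally I would assemble these facts: $\hat f_c$ is $C^{r,1}$ (hence $C^{s,1}$) on each of the two open subintervals, its derivatives up to order $s$ match at $x=0$ so it is globally $C^{s,1}([1-b,1])$ — here one should note that Lipschitz continuity of $\hat f_c^{(s)}$ on the whole interval follows from Lipschitz continuity on each closed piece plus continuity at the junction, a standard gluing argument — and the order-$\le s$ derivatives agree at $1-b$ and $1$, giving membership in $D_0^{s,1}$; wait, the claimed conclusion is only $C_0^{s,1}([1-b,1])$, so it suffices to record the $C^{s,1}$ gluing plus the endpoint matching and stop there. The only mildly delicate point — the main obstacle, such as it is — is the gluing step: one must verify that a function which is $C^{s,1}$ on $[1-b,0]$ and on $[0,1]$ with matching derivatives up to order $s$ at $0$ is genuinely $C^{s,1}$ on $[1-b,1]$, i.e. that the $s$th derivative is globally Lipschitz. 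This is routine: away from $0$ the two local Lipschitz bounds apply, and a pair of points straddling $0$ is handled by inserting $0$ and using the triangle inequality, so the global Lipschitz constant is at most the sum (or max) of the local ones.
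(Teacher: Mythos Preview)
Your argument is correct and follows exactly the line the paper itself indicates: the paper does not give a formal proof of this lemma but remarks just before it that ``it is straightforward to see'' the continuity of $\hat f_c^{(\ell)}$ at $x=0$ and the endpoint matching $\hat f_c^{(\ell)}(1-b)=\hat f_c^{(\ell)}(1)$ for $0\le\ell\le s$ from $s$-exactness. Your write-up simply supplies the details---reading off the boundary values of $(\mathcal{L}_r(\hat F))^{(\ell)}$ from \cref{deriv_cond_11}--\cref{deriv_cond_22}, matching them against $f^{(\ell)}(0)$ and $f^{(\ell)}(1)$ via $s$-exactness, and then invoking the routine gluing of two $C^{s,1}$ pieces---so there is nothing to compare.
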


\begin{lemma} \label{approx_lemma}
Let $f \in C^{r,1}[0,1]$, $f_c$ and $\hat{f}_c$ be as given in \cref{fc_gen} and \cref{fc_gen_approx} respectively where $\hat{F} \in M_{2,r+1}(\mathbb{R})$ is $s$-exact with respect to $f$, $0 \le s < r$. 
Then, there is a constant $C > 0$ independent of $n$, such that
\[
\Vert \mathcal{T}_{n,b}(\hat{f}_c) - \mathcal{T}_{n,b}(f_c) \Vert_{\infty,[0,1]}
 \le C n^{-(s+1)} \Vert \hat{F}-F \Vert_{max}.
\]
\end{lemma}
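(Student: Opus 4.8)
The plan is to reduce everything to \cref{lemma-thm} by exploiting the linearity of $\mathcal{L}_r$ and of the truncation operator $\mathcal{T}_{n,b}$. I would set $g := \hat f_c - f_c$; by \cref{fc_gen} and \cref{fc_gen_approx}, $g \equiv 0$ on $[0,1]$ and $g = \mathcal{L}_r(\hat F - F)$ on $[1-b,0)$, so that $\mathcal{T}_{n,b}(\hat f_c) - \mathcal{T}_{n,b}(f_c) = \mathcal{T}_{n,b}(g)$ and, since $g$ vanishes on $[0,1]$,
\[
\Vert \mathcal{T}_{n,b}(\hat f_c) - \mathcal{T}_{n,b}(f_c)\Vert_{\infty,[0,1]} = \Vert \mathcal{T}_{n,b}(g)\Vert_{\infty,[0,1]} = \Vert \mathcal{T}_{n,b}(g) - g\Vert_{\infty,[0,1]}.
\]
Because $\hat F$ is $s$-exact, the first $s+1$ columns of $\hat F - F$ vanish, so by \cref{L_expanded} one can write $g = \sum_{j=0}^{1}\sum_{m=s+1}^{r} \bigl(\hat f_{jm} - f^{(m)}(j)\bigr)\, g_{jm}$, where $g_{jm}$ denotes the extension-by-zero of $L_m^{j}$ from $[1-b,0)$ to $[1-b,1]$. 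By linearity of $\mathcal{T}_{n,b}$ it then suffices to control $\Vert \mathcal{T}_{n,b}(g_{jm}) - g_{jm}\Vert_{\infty,[0,1]}$ for each admissible pair $(j,m)$.

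The central step --- and the one I expect to require the most care --- is to check that $g_{jm} \in D^{s,1}_0([1-b,1])$ for every $m$ with $s+1 \le m \le r$, so that \cref{lemma-thm}, applied with $r$ replaced by $s$, becomes available. On $[1-b,0]$ one has $g_{jm} = L_m^{j}$, which by the definition of $\mathcal{L}_r$ lies in $D^{r,1}([1-b,0]) \subseteq D^{s,1}([1-b,0])$; on $[0,1]$ it is identically zero; hence $g_{jm}$ is piecewise $D^{s,1}$ with the only possible irregularity at $x = 0$. Now the derivative conditions \cref{deriv_cond_11}-\cref{deriv_cond_22} give $(L_m^{j})^{(\ell)}(0) = 0$ and $(L_m^{j})^{(\ell)}(1-b) = 0$ for all $0 \le \ell \le s$, since $\delta_{\ell m} = 0$ whenever $\ell \le s < m$. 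Therefore $g_{jm}^{(\ell)}$ is continuous at $x=0$ and $g_{jm}^{(\ell)}(1-b) = g_{jm}^{(\ell)}(1) = 0$ for $0 \le \ell \le s$, which places $g_{jm}$ in $C^s([1-b,1]) \cap C^{s+2}_{pw}([1-b,1])$ together with the periodic conditions up to order $s$; that is, $g_{jm} \in D^{s,1}_0([1-b,1])$. This is exactly the mechanism behind \cref{approx_regularity}, and the point to watch is that extending $L_m^j$ by zero across $x=0$ genuinely keeps us inside $D^{s,1}_0$ rather than merely in $C^{s,1}$.

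Granting this, \cref{lemma-thm} provides, for each $(j,m)$, a constant $C_{jm} > 0$ independent of $n$ with $\Vert \mathcal{T}_{n,b}(g_{jm}) - g_{jm}\Vert_{\infty,[0,1]} \le C_{jm}\, n^{-(s+1)}$. Assembling the pieces via the triangle inequality and the bound $|\hat f_{jm} - f^{(m)}(j)| \le \Vert \hat F - F\Vert_{max}$ gives
\[
\Vert \mathcal{T}_{n,b}(\hat f_c) - \mathcal{T}_{n,b}(f_c)\Vert_{\infty,[0,1]} \le \Bigl(\sum_{j=0}^{1}\sum_{m=s+1}^{r} C_{jm}\Bigr)\, n^{-(s+1)}\,\Vert \hat F - F\Vert_{max},
\]
which is the assertion with $C := \sum_{j,m} C_{jm}$. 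As an alternative to this basis-perturbation argument, one could instead repeat the integration-by-parts estimate from the proof of \cref{lemma-thm} directly on $g$, using \cref{deriv_cond_1}-\cref{deriv_cond_2} to identify each surviving boundary or jump contribution with an entry of $\hat F - F$, and the boundedness of $\mathcal{L}_r$ to bound $\Vert g^{(s+2)}\Vert_{L^1([1-b,1])}$ by a constant times $\Vert \hat F - F\Vert_{max}$; both routes yield the same estimate, but the basis-perturbation version keeps the constant tracking implicit.
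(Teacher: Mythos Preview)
Your argument is correct. The main route you take---decomposing $g = \hat f_c - f_c$ into the basis pieces $g_{jm}$, verifying $g_{jm} \in D^{s,1}_0([1-b,1])$ via \cref{deriv_cond_11}--\cref{deriv_cond_22} (together with the fact that $L_m^j \in D^{r,1}([1-b,0]) \subset D^{s,1}([1-b,0])$), and then invoking \cref{lemma-thm} with $r$ replaced by $s$---differs from the paper's proof. The paper instead works directly with the Fourier coefficients: after the same reduction to $\sum_{|k|\ge n+1}|c_k(\hat f_c - f_c)|$, it integrates by parts $s+1$ times on $[1-b,0]$ (the $s$-exactness and \cref{deriv_cond_11}--\cref{deriv_cond_22} kill all boundary terms), obtaining an explicit expression involving $\int_{1-b}^0 (L_m^j)^{(s+1)}(x)e^{-2\pi i kx/b}\,dx$, and then uses the Lipschitz continuity of $(L_m^j)^{(s+1)}$ to gain the extra factor of $k^{-1}$ needed for summability. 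This is precisely the alternative you outline in your final paragraph.

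What each buys: your basis-perturbation argument is modular---once \cref{lemma-thm} is available, the present lemma follows by linearity with essentially no new analysis, and the regularity check on $g_{jm}$ is transparent. The paper's direct computation, by contrast, makes the structure of $c_k(\hat f_c - f_c)$ explicit (a factor $(b/2\pi i k)^{s+1}$ times Fourier-type integrals of $(L_m^j)^{(s+1)}$), which keeps the dependence of the constant on the basis functions visible and aligns with the style of the discrete estimates in \cref{sec:numexp}. Both routes yield the same bound.
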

\begin{proof}
Clearly,
\begin{align}
&\Vert \mathcal{T}_{n,b}(\hat{f}_c) - \mathcal{T}_{n,b}(f_c) \Vert_{\infty,[0,1]} = \Vert \mathcal{T}_{n,b}(\hat{f}_c - f_c) - (\hat{f}_c - f_c)  \Vert_{\infty,[0,1]} \le \nonumber \\ 
& \Vert \mathcal{T}_{n,b}(\hat{f}_c - f_c) - (\hat{f}_c - f_c)  \Vert_{\infty,[1-b,1]} \le \sum_{|k| \ge n+1} |c_k(\hat{f}_c - f_c)|. \label{eq:approx_lemma_1}
\end{align}
Now, from the derivative conditions \cref{deriv_cond_1}-\cref{deriv_cond_2}, and $(s+1)$ times integrate by parts, we get
\begin{align*}
c_k(\hat{f}_c - f_c) &= \frac{1}{b} \left(\frac{b}{2\pi i k}\right)^{s+1} \sum_{m=s+1}^r \left( \hat{f}_{0m} - f^{(m)}(0) \right) \int_{1-b}^0 \left(L_m^0 \right)^{(s+1)}(x) e^{-2\pi i k x/b}\,dx \\ 
&+ \frac{1}{b} \left(\frac{b}{2\pi i k}\right)^{s+1} \sum_{m=s+1}^r \left( \hat{f}_{1m} - f^{(m)}(1) \right) \int_{1-b}^0 \left(L_m^1\right)^{(s+1)}(x) e^{-2\pi i k x/b}\,dx.
\end{align*}
Now, using \cref{eq:approx_lemma_1} and the fact that $\left(L_m^0\right)^{(s+1)}$ and $\left(L_m^1\right)^{(s+1)}$ are Lipschitz continuous, we conclude the result.
\end{proof}

The next result shows that 
the Fourier continuation approximations converge rapidly and the rate of convergence is tied only to the smoothness of $f$ and the order of accuracy in the derivative approximations.

\begin{theorem} \label{thm}
Let  $f \in D^{r,1}([0,1])$, $f_c$ and $\hat{f}_c$ be as given in \cref{fc_gen} and \cref{fc_gen_approx} respectively where $\hat{F} \in M_{2,r+1}(\mathbb{R})$ is $s$-exact with respect to $f$, $0 \le s < r$ . Then there exist positive constants $C_1$ and $C_2$ such that
\[
\Vert \mathcal{T}_{n,b}(\hat{f}_c) - f \Vert_{\infty,[0,1]} \le \frac{C_1}{n^{s+1}} \Vert \hat{F}-F \Vert_{max}   
+ \frac{C_2}{n^{r+1}}
\]
for all $n \ge 1$. In particular, if $\Vert \hat{F}-F \Vert_{\max} = \mathcal{O}(1/n^p),\ p+s  > r$, then
\[
\Vert \mathcal{T}_{n,b}(\hat{f}_c) - f \Vert_{\infty,[0,1]} = \mathcal{O}\left( \frac{1}{n^{r+1}} \right)
\]
whereas, if  $p+s \le r$, then 
\[
\Vert \mathcal{T}_{n,b}(\hat{f}_c) - f \Vert_{\infty,[0,1]} = \mathcal{O}\left( \frac{1}{n^{p+s+1}} \right)
\]
\end{theorem}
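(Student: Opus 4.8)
The plan is to combine the three preceding results by a triangle-inequality decomposition of the total error into an approximation-quality term and a data-perturbation term. Specifically, I would write
\[
\Vert \mathcal{T}_{n,b}(\hat{f}_c) - f \Vert_{\infty,[0,1]}
\le \Vert \mathcal{T}_{n,b}(\hat{f}_c) - \mathcal{T}_{n,b}(f_c) \Vert_{\infty,[0,1]}
+ \Vert \mathcal{T}_{n,b}(f_c) - f \Vert_{\infty,[0,1]}.
\]
The first term on the right is controlled directly by \cref{approx_lemma}, which gives a bound of the form $C_1 n^{-(s+1)} \Vert \hat{F}-F \Vert_{max}$. For the second term, I would observe that on $[0,1]$ we have $f_c \equiv f$ by \cref{fc_1}, so $\Vert \mathcal{T}_{n,b}(f_c) - f \Vert_{\infty,[0,1]} = \Vert \mathcal{T}_{n,b}(f_c) - f_c \Vert_{\infty,[0,1]}$; since $f \in D^{r,1}([0,1])$ and $\mathcal{L}_r$ maps into $D^{r,1}([1-b,0])$ with the derivative conditions \cref{deriv_cond_1}--\cref{deriv_cond_2} holding, the continuation $f_c$ lies in $D^{r,1}_0([1-b,1])$, so \cref{lemma-thm} applies and yields the bound $C_2 n^{-(r+1)}$. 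Adding these two estimates gives the first displayed inequality of the theorem.

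For the asymptotic consequences, I would simply substitute the hypothesis $\Vert \hat{F}-F \Vert_{max} = \mathcal{O}(n^{-p})$ into the master inequality, obtaining
\[
\Vert \mathcal{T}_{n,b}(\hat{f}_c) - f \Vert_{\infty,[0,1]}
= \mathcal{O}\!\left( n^{-(p+s+1)} \right) + \mathcal{O}\!\left( n^{-(r+1)} \right),
\]
and then compare exponents: if $p+s > r$ the first term is dominated by the second and the rate is $\mathcal{O}(n^{-(r+1)})$, while if $p+s \le r$ the first term dominates (or ties) and the rate is $\mathcal{O}(n^{-(p+s+1)})$. This is an entirely routine case split once the main inequality is in hand.

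The only genuine subtlety — and the step I would check most carefully — is the verification that $f_c \in D^{r,1}_0([1-b,1])$ so that \cref{lemma-thm} is actually applicable as stated: one needs that the pieces match to order $r$ at the interior junction $x=0$ (which is exactly \cref{deriv_cond_1} together with \cref{fc_1}), that they match to order $r$ at the identified endpoints $x=1-b$ and $x=1$ (which is \cref{deriv_cond_2}), and that the second derivative beyond order $r$ exists piecewise, which holds because $f \in C^{r+2}_{pw}([0,1])$ and $\mathcal{L}_r(F) \in D^{r,1}([1-b,0]) = C^r \cap C^{r+2}_{pw}$. One should also note that this argument does not require $\hat{F}$ to be $s$-exact for the $C_2 n^{-(r+1)}$ term — that term comes purely from the exact continuation $f_c$ — whereas the $s$-exactness is only used through \cref{approx_lemma} for the perturbation term; keeping the two roles separate is what makes the decomposition clean. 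Beyond that, the proof is a short assembly of already-established lemmas.
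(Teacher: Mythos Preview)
Your proposal is correct and follows exactly the paper's approach: the proof there consists of precisely the same triangle-inequality split $\Vert \mathcal{T}_{n,b}(\hat{f}_c) - f \Vert_{\infty,[0,1]} \le \Vert \mathcal{T}_{n,b}(\hat{f}_c) - \mathcal{T}_{n,b}(f_c) \Vert_{\infty,[0,1]} + \Vert \mathcal{T}_{n,b}(f_c) - f \Vert_{\infty,[0,1]}$ followed by an appeal to \cref{approx_lemma} and \cref{lemma-thm}. Your added verification that $f_c \in D^{r,1}_0([1-b,1])$ and your explicit case split on $p+s$ versus $r$ are more detailed than the paper's terse treatment, but the substance is identical.
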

\begin{proof}
The result follows from \cref{lemma-thm}, 
\cref{approx_lemma} and the triangle inequality
\begin{align*}
\Vert \mathcal{T}_{n,b}(\hat{f}_c) - f \Vert_{\infty,[0,1]} & \le \Vert \mathcal{T}_{n,b}(\hat{f}_c) -  \mathcal{T}_{n,b}(f_c) \Vert_{\infty,[0,1]} + \Vert  \mathcal{T}_{n,b}(f_c) - f \Vert_{\infty,[0,1]}.
\end{align*}
\end{proof}


In the next section, we review a well known explicit Fourier continuation strategy based on two point Hermite interpolation that falls within the framework we discussed here. 

\section{The construction based two point Hermite interpolation} \label{sec:polycont}

In this section, we investigate the Fourier extension strategy using two point Hermite interpolation that has been used before in various contexts (for examples, see \cite{Garbey:1998aa,Garbey:2000aa, Potts:2004aa, Potts:2017aa}).
While one could work with any $b > 1$ for the construction,
we restrict our presentation to the choice $b=2$.
For a matrix $\hat{F} = [\hat{f}_{jk}]_{0 \le j \le 1,0 \le k \le {r}} \in M_{2,r+1}(\mathbb{R})$, we introduce the polynomial $\mathcal{P}_r(\hat{F})(x)$ of degree $2r+1$ given by
\begin{align}
\mathcal{P}_r(\hat{F})(x) &= (1+x)^{r+1} \sum_{m=0}^r \frac{\hat{f}_{0m}}{m!}\sum_{n = 0}^{r-m} (-1)^{n} \binom{r+n}{n} x^{m+n}
 \nonumber \\ &+ (-x)^{r+1} \sum_{m=0}^r \frac{\hat{f}_{1m}}{m!}\sum_{n = 0}^{r-m}  \binom{r+n}{n} (1+x)^{m+n}.
 \label{poly}
\end{align}



We note that $\mathcal{P}_r(\hat{F})(x)$ can be expressed as
\[
\mathcal{P}_r(\hat{F})(x) = \sum_{m=0}^r \hat{f}_{0m} P_m^0(x) + \sum_{m=0}^r \hat{f}_{1m} P_m^1(x)
\]
with
\[
P_m^0(x) = \frac{1}{m!} x^{m}(1+x)^{r+1}  \sum_{n = 0}^{r-m} (-x)^{n} \binom{r+n}{n}
\]
and
\[
P_m^1(x) = \frac{1}{m!}  (1+x)^{m}(-x)^{r+1}  \sum_{n = 0}^{r-m} (1+x)^{n} \binom{r+n}{n}
\]
Before we show that the $\mathcal{P}_r$ defined above indeed is a bounded linear operator that satisfies derivative conditions \cref{deriv_cond_1}-\cref{deriv_cond_2}, we observe the following useful fact.

\begin{lemma}\label{binid:1}
Let $n$ be a positive integer and $r$ be an integer with $n \le r+1$. Then, we have
\begin{align} \label{binid:11}
\sum_{k = 0}^{n} (-1)^{k}  \binom{r+1}{k} \binom{r+n-k}{r} =  0.
\end{align}
\end{lemma}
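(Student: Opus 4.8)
The plan is to recognize the left-hand side of \cref{binid:11} as the coefficient of $x^n$ in the Cauchy product of two power series whose product is identically $1$. Concretely, I would start from the two elementary expansions
\[
(1-x)^{r+1} = \sum_{k=0}^{r+1} (-1)^k \binom{r+1}{k} x^k, \qquad \frac{1}{(1-x)^{r+1}} = \sum_{j=0}^{\infty} \binom{r+j}{r} x^j,
\]
the second being the negative binomial series, both valid for $|x| < 1$. Forming the product and collecting the coefficient of $x^n$ gives $\sum_{k} (-1)^k \binom{r+1}{k}\binom{r+n-k}{r}$, where $k$ ranges over $0 \le k \le \min(n, r+1)$; the hypothesis $n \le r+1$ is exactly what guarantees $\min(n,r+1) = n$, so the sum in \cref{binid:11} is the full coefficient. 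Since the two series multiply to the constant function $1$, this coefficient vanishes for every $n \ge 1$, which is the claimed identity.

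A purely finite, power-series-free alternative—should one prefer it—is to observe that $\binom{r+n-k}{r}$, regarded as a function of the integer $k$, agrees with the polynomial $q(k) = \tfrac{1}{r!}(r+n-k)(r+n-k-1)\cdots(n-k+1)$ of degree $r$. Moreover, for $n+1 \le k \le r+1$ one has $0 \le r+n-k < r$, whence $\binom{r+n-k}{r} = 0$, so the summation range may be extended to $0 \le k \le r+1$ without altering the value. The extended sum $\sum_{k=0}^{r+1} (-1)^k \binom{r+1}{k} q(k)$ is, up to an overall sign, the $(r+1)$-st forward difference of the degree-$r$ polynomial $q$, and therefore it is $0$.

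I do not anticipate a genuine obstacle here; the single point deserving care in either route is the bookkeeping on the summation range—verifying that $n \le r+1$ is precisely the condition under which the truncated sum equals the full Cauchy-product coefficient (first argument), and that the terms with $k > n$ vanish identically rather than contribute (second argument). I would write out these range checks explicitly, so that the role of the hypothesis $n \le r+1$ is transparent, and then conclude immediately.
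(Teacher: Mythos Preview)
Your primary (power-series) argument is correct and is essentially the paper's own proof: the paper differentiates the identity $(1+x)^{r+1}\cdot (1+x)^{-(r+1)}=1$ $n$ times via the Leibniz rule and evaluates at $x=0$, which is exactly the derivative-level restatement of your Cauchy-product computation of the $x^n$ coefficient in $(1-x)^{r+1}\cdot (1-x)^{-(r+1)}=1$. The sign change $(1+x)\leftrightarrow(1-x)$ and the passage between Taylor coefficients and derivatives are purely cosmetic; the role of $n\le r+1$ in pinning down the summation range is handled identically in both.

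Your finite-difference alternative is a genuinely different route: rather than invoking an analytic identity, it exploits that $k\mapsto\binom{r+n-k}{r}$ is a polynomial of degree $r$ and that the $(r+1)$-st iterated difference annihilates it. This buys a self-contained combinatorial proof with no appeal to generating functions, at the mild cost of the extra range-extension step (checking $\binom{r+n-k}{r}=0$ for $n<k\le r+1$). Either argument is complete; the paper simply chose the analytic one.
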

\begin{proof}
The identity \cref{binid:11} follows from the observation that, for $m \ge 0$, we have
\[
\binom{r+m}{r} = \frac{(-1)^{m}}{m!} \left. \frac{d^m}{dx^m} \frac{1}{(1+x)^{r+1}}\right|_{x=0}
\]
and that, for $0 \le k \le r+1$,
\[
\binom{r+1}{k} = \frac{1}{k!} \left. \frac{d^k}{dx^k} (1+x)^{r+1} \right|_{x=0}.
\]
Indeed,
\[
0 =  \frac{d^n}{dx^n} \left[ (1+x)^{r+1}\frac{1}{(1+x)^{r+1}}\right] = \sum_{k=0}^n \binom{n}{k} \left[\frac{d^k}{dx^k} (1+x)^{r+1}\right] \left[\frac{d^{n-k}}{dx^{n-k}} \frac{1}{(1+x)^{r+1}}\right],
\] 
and the result follows.
\end{proof}

\begin{theorem} \label{thm_bdd_lin_op}
The operator $\mathcal{P}_r$ defines a bounded linear operator from $M_{2,r+1}$ to $C^{r,1}([-1,0])$ that also satisfies derivative conditions \cref{deriv_cond_1}-\cref{deriv_cond_2}.
\end{theorem}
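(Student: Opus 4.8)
The plan is to verify, in order, (i) linearity, (ii) that the range lies in $C^{r,1}([-1,0])$ together with boundedness, and (iii) the derivative conditions, the last being the only substantive part. Linearity is immediate from \cref{poly}: that formula exhibits $\mathcal{P}_r(\hat F)$ as the linear combination $\sum_{m=0}^r \hat f_{0m} P_m^0 + \sum_{m=0}^r \hat f_{1m} P_m^1$ of the \emph{fixed} polynomials $P_m^0, P_m^1$, so $\mathcal{P}_r$ depends linearly on the entries of $\hat F$. Each $P_m^0$ and $P_m^1$ is a polynomial of degree $2r+1$, hence $\mathcal{P}_r(\hat F)$ is a polynomial and in particular lies in $C^{r,1}([-1,0])$ (its $r$th derivative, being a polynomial, is Lipschitz on the compact interval $[-1,0]$); and since $M_{2,r+1}(\mathbb{R})$ is finite dimensional, the linear map $\mathcal{P}_r$ is automatically bounded, with an explicit constant obtained by collecting the norms of the finitely many basis polynomials $P_m^0, P_m^1$ against $\Vert \hat F \Vert_{max}$.

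For the derivative conditions, I recall that for the exact matrix $F$ the conditions \cref{deriv_cond_1}-\cref{deriv_cond_2} are, through the expansion \cref{L_expanded}, equivalent to the pointwise identities \cref{deriv_cond_11}-\cref{deriv_cond_22}; since $P_m^0$ and $P_m^1$ play the roles of $L_m^0$ and $L_m^1$, it suffices to prove that, for $0 \le \ell, m \le r$,
\[
(P_m^0)^{(\ell)}(0) = \delta_{\ell m}, \qquad (P_m^0)^{(\ell)}(-1) = 0, \qquad (P_m^1)^{(\ell)}(0) = 0, \qquad (P_m^1)^{(\ell)}(-1) = \delta_{\ell m}.
\]
Two of the four are read off from the factored forms: $P_m^0$ carries the factor $(1+x)^{r+1}$, so it has a zero of order $r+1 > r$ at $x=-1$, giving $(P_m^0)^{(\ell)}(-1) = 0$; similarly $P_m^1$ carries $(-x)^{r+1}$, giving $(P_m^1)^{(\ell)}(0) = 0$. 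For the value at $x=-1$ of $P_m^1$, I will use the symmetry $P_m^1(x) = (-1)^m P_m^0(-(1+x))$, obtained by substituting $v = -(1+x)$ into the definition of $P_m^0$ and simplifying. Since $x \mapsto -(1+x)$ is affine with derivative $-1$ and sends $-1$ to $0$, the chain rule then gives $(P_m^1)^{(\ell)}(-1) = (-1)^{m+\ell}(P_m^0)^{(\ell)}(0) = (-1)^{m+\ell}\delta_{\ell m} = \delta_{\ell m}$. Everything therefore reduces to the single claim $(P_m^0)^{(\ell)}(0) = \delta_{\ell m}$.

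To prove this, I will write $P_m^0(x) = \tfrac{1}{m!}\, x^m\, Q_m(x)$ with $Q_m(x) = (1+x)^{r+1}\sum_{n=0}^{r-m}(-1)^n\binom{r+n}{n}x^n$. The Leibniz rule at $x=0$ together with $(x^m)^{(j)}(0) = m!\,\delta_{jm}$ gives $(P_m^0)^{(\ell)}(0) = 0$ for $\ell < m$ and $(P_m^0)^{(\ell)}(0) = \binom{\ell}{m}(\ell-m)!\,[x^{\ell-m}]Q_m$ for $\ell \ge m$; hence the claim is equivalent to $[x^0]Q_m = 1$ and $[x^k]Q_m = 0$ for $1 \le k \le r-m$. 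Expanding the product defining $Q_m$ (the constraint $n \le r-m$ being inactive once $k \le r-m$) and using $\binom{r+n}{n} = \binom{r+n}{r}$, one finds $[x^k]Q_m = (-1)^k \sum_{j=0}^k (-1)^j \binom{r+1}{j}\binom{r+k-j}{r}$, which is exactly $(-1)^k$ times the left-hand side of \cref{binid:11} evaluated at parameter $n = k$ (summation index renamed $j$); since $1 \le k \le r-m < r+1$, \cref{binid:1} yields $[x^k]Q_m = 0$, while $[x^0]Q_m = \binom{r+1}{0}\binom{r}{r} = 1$. This completes the verification of the derivative conditions and hence the theorem.

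The main obstacle is this last combinatorial step: recognizing that the low-order Taylor coefficients of $Q_m$ at the origin are governed precisely by the identity of \cref{binid:1} — which is exactly why that lemma was isolated beforehand. Everything else (linearity, $C^{r,1}$ regularity, automatic boundedness, and two of the four endpoint conditions) is essentially bookkeeping, and the reflection relation $P_m^1(x) = (-1)^m P_m^0(-(1+x))$ lets the two remaining endpoint conditions be handled together without a separate computation.
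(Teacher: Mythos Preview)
Your proof is correct and follows the same overall architecture as the paper's --- linearity, boundedness, then the derivative conditions reduced via \cref{binid:1} --- but you streamline two of the steps. First, where the paper establishes boundedness by an explicit pointwise estimate yielding $\Vert \mathcal{P}_r \Vert \le 2\binom{2r+2}{r}$, you invoke finite dimensionality of the domain; this is cleaner but forfeits the explicit constant. Second, and more substantively, the paper computes closed-form expressions for $(P_m^0)^{(\ell)}(x)$ and $(P_m^1)^{(\ell)}(x)$ (its equations \cref{L0Deriv}--\cref{L1Deriv}) and evaluates each at both endpoints, treating $P_m^0$ and $P_m^1$ separately throughout. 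You instead observe the reflection identity $P_m^1(x) = (-1)^m P_m^0(-(1+x))$, which collapses the $P_m^1$ verification to the $P_m^0$ one, and then handle $(P_m^0)^{(\ell)}(0)$ by a Leibniz/Taylor-coefficient argument on the factorization $P_m^0(x) = \tfrac{1}{m!}x^m Q_m(x)$ rather than differentiating the full product. Both routes land on the same application of \cref{binid:1}; yours avoids writing down the general $\ell$th-derivative formula, while the paper's explicit formulas have the side benefit of being reusable elsewhere if one needs derivative bounds away from the endpoints.
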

\begin{proof}
The linearity of $\mathcal{P}_r$ is obvious from the definition. Now, for $-1 \le x \le 0$,
\begin{align*} 
|\mathcal{P}_r(\hat{F})(x)| &\le  \sum_{m=0}^r \frac{|\hat{f}_{0m}|}{m!}\sum_{n = 0}^{r-m} \binom{r+n}{n} + \sum_{m=0}^r \frac{|\hat{f}_{1m}|}{m!}\sum_{n = 0}^{r-m}  \binom{r+n}{n} \\
&\le \Vert \hat{F} \Vert_{max} \left( \sum_{m=0}^r \frac{1}{m!} \binom{2r+1-m}{r-m} + \sum_{m=0}^r \frac{1}{m!} \binom{2r+1-m}{r-m}  \right) \\ & \le  2 \binom{2r+2}{r} \Vert \hat{F} \Vert_{max}.
\end{align*}
The boundedness of $\mathcal{P}_r$ thus follows with $\Vert \mathcal{P}_r \Vert \le 2\binom{2r+2}{r}$ where $\Vert \cdot \Vert$ is the induced operator norm.
Now,
\begin{align}
&(P_m^0)^{(\ell)}(x) = \nonumber \\
 &\frac{\ell !}{m!}  \sum_{k=0}^{\ell}  \binom{r+1}{k}  (1+x)^{r+1-k}  \sum_{n = (\ell-k-m)_+}^{r-m} (-1)^{n} \binom{r+n}{n}  \binom{m+n}{\ell-k} x^{n-(\ell-k-m)} \label{L0Deriv}
\end{align}
and
\begin{align}
&(P_m^1)^{(\ell)}(x) = \nonumber \\
 &(-1)^{r+1}\frac{\ell !}{m!}  \sum_{k=0}^{\ell}  \binom{r+1}{k} x^{r+1-k}  \sum_{n = (\ell-k-m)_+}^{r-m}  \binom{r+n}{n}  \binom{m+n}{\ell-k} (1+x)^{n-(\ell-k-m)}, \label{L1Deriv}
\end{align}
where $(x)_+ = \max(0,x)$. It is clear from \cref{L0Deriv} that $(P_m^0)^{(\ell)}(-1) = 0$ for all $0 \le \ell \le r$. Also, if $\ell < m$, then $(P_m^0)^{(\ell)}(0) = 0$. For $\ell \ge m$, we have
\begin{align*}
(P_m^0)^{(\ell)}(0) &= \frac{\ell !}{m!}  \sum_{k=0}^{\ell-m} (-1)^{\ell-k-m} \binom{r+1}{k}  \binom{r+\ell-k-m}{\ell-k-m} .
\end{align*}
Clearly, $(P_m^0)^{(m)}(0) = 1$. For $\ell > m$, on the other hand, it follows from \cref{binid:1} that $(P_m^0)^{(\ell)}(0) = 0$. Thus, $(P_m^0)^{(\ell)}$ satisfy \cref{deriv_cond_11}.
Using \cref{L1Deriv}, one similarly sees that $(P_m^1)^{(\ell)}(0) = 0$ and $(P_m^1)^{(\ell)}(-1) = \delta_{\ell m}$, that is, $(P_m^1)^{(\ell)}$ satisfy \cref{deriv_cond_22}.
\end{proof}

In the light of Theorem \cref{thm_bdd_lin_op}, we utilize $\mathcal{P}_r(F)$ for a continuation of $f$ given by
\begin{equation} \label{fc}
f_c(x) = 
\begin{cases}
f(x), & x \in [0,1] \\
\mathcal{P}_r(F)(x), & x \in [-1,0).
\end{cases}
\end{equation}
The truncated Fourier series that, in this setting, reads
\begin{equation} \label{tfs}
\mathcal{T}_{n,2}(f)(x) = \sum_{k=-n}^{n} c_k(f_c) e^{\pi i k x}
\end{equation}
with
\begin{align} \label{fc_coeffs}
c_k(f_c) = \frac{1}{2}\int_{-1}^1 f_c(x)e^{-\pi i k x} \,dx.
\end{align}
is then used as an approximation to $f$ on $[0,1]$. 

\section{A discrete approximation problem and numerical examples} \label{sec:numexp}

We now consider the problem of constructing a Fourier continuation approximation for the case where discrete functional data is available on an equispaced grid on the interval $[0,1]$ that has relevance in many applications including certain PDE solvers. Such a grid of size $n+1$ has its $j$th grid point at $x_j = j/n$ where the corresponding function value $f(x_j)$ is assumed to be known and are denoted by $f_j$ for $j = 0, \ldots, n$. The functional data, in this case, is continued to the equispaced grid on $[-1,1]$ according to
\begin{equation}
\label{eq:fc_discrete}
(\hat{f}^d_c)_{j} = 
\begin{cases}
f_j, & j = 0, \ldots, n \\
\mathcal{P}_r(F_{n,p})(j/n), & j = -n, \ldots, -1,
\end{cases}
\end{equation}
where the $0$-exact boundary data matrix $F_{n,p}$ is obtained as 
\[
f_{0,m} = \mathcal{D}_{n,p}^{m,+}(f)(x_0) \text{ and } f_{1,m} = \mathcal{D}_{n,p}^{m,-}(f)(x_n).
\]
for $1 \le m \le r$,
using forward and backward finite difference derivative operators $\mathcal{D}_{n,p}^{m,+}(f)$ and $\mathcal{D}_{n,p}^{m,-}(f)$ respectively of order of accuracy $p$ for approximations of $f^{(m)}$ whose generic form reads
\[
D_{n,p}^{m,\pm}(f)(x_\ell) = (\pm n)^m\left( \sum_{k=0}^{m+p-1} \left(a_{p}^m\right)_k f_{\ell\pm k} \right)
\]
for appropriately chosen constants $\left(a_{p}^m\right)_k$. 
Now, using $\hat{f}^d_{c}$, as obtained in \cref{eq:fc_discrete}, we compute 
\begin{align} \label{fc_coeffs_a}
c^d_k(\hat{f}^d_c) = 
\frac{1}{2n}\sum_{j=-n}^{n-1} (\hat{f}^d_c)_{j} e^{-\pi i j k /n}, 
\end{align}
for $k = -n,\ldots, n-1$,
to arrive at the interpolating Fourier continuation approximation for the discrete problem given by
\begin{align} \label{tfsa}
\mathcal{T}^d_{n,2}(\hat{f}^d_c)(x) = \sum_{k=-n}^{n-1} c^d_k(\hat{f}^d_c) e^{\pi i k x}.
\end{align}
Note that the coefficients $c_k^d(\hat{f}_c^d)$ can be computed in $\mathcal{O}(n\log n)$ computational time using the {\em fast Fourier transform} (FFT).


To guage the accuracy of such approximations, we begin by obtaining an estimate for 
$\Vert \mathcal{T}^d_{n,2}(\hat{f}^d_c)  - \mathcal{T}_{n,2}(f_c)\Vert_{\infty,[0,1]}$. 
Toward this, the following straightforward calculation 
\begin{align*}
& \mathcal{T}^d_{n,2}(\hat{f}^d_c)(x) - \mathcal{T}_{n,2}(f_c)(x) 
=  \sum_{k=-n}^{n-1} \left(c_k^d(\hat{f}^d_c) - c_k(f_c)\right)e^{\pi i k x} - c_{n}(f_c)e^{\pi i n x}  \\
&= \sum_{k=-n}^{n-1} \left(c_k^d(\hat{f}^d_c) - c_k^d(f^d_c)\right)e^{\pi i k x} + \sum_{k=-n}^{n-1} \left(c_k^d(f^d_c) - c_k(f_c)\right)e^{\pi i k x} - c_{n}(f_c)e^{\pi i n x}  \\
&= \sum_{k=-n}^{n-1} e^{\pi i k x} \frac{1}{2n} \sum_{j=-n}^{n-1} \left((\hat{f}^d_c)_j-(f^d_c)_{j}\right) e^{-\pi i j k /n} \\ &+ \sum_{k=-n}^{n-1} \left( \frac{1}{2n}\sum_{j=-n}^{n-1} (f^d_c)_{j} e^{-\pi i j k /n} - c_k(f_c) \right)e^{\pi i k x} - c_{n}(f_c)e^{\pi i n x} \\
& = \sum_{k=-n}^{n-1} e^{\pi i k x} \frac{1}{2n}\sum_{j=-n}^{n-1} \sum_{\ell=-\infty}^{\infty} c_{\ell}(\hat{f}_c-f_c) e^{\pi i j (\ell-k) /n} \\ &+ \sum_{k=-n}^{n-1} \left(\frac{1}{2n}\sum_{j=-n}^{n-1} \sum_{\ell=-\infty}^{\infty} c_{\ell}(f_c) e^{\pi i j (\ell-k) /n} - c_k(f_c)\right)e^{\pi i k x} - c_{n}(f_c)e^{\pi i n x} \\
& = \sum_{k=-n}^{n-1} \sum_{\ell=-\infty}^{\infty}  c_{k+2\ell n}(\hat{f}_c-f_c) e^{\pi i (k+2\ell n) x} + \sum_{k=-n}^{n-1}  \sum_{\substack{\ell=-\infty \\ \ell\ne 0}}^{\infty} c_{k+2\ell n}(f_c) e^{\pi i k x} - c_{n}(f_c)e^{\pi i n x},
\end{align*}
and the fact that $\hat{f}_c(x) = f_c(x)$ for $x \in [0,1]$ 
yields
\begin{align*}
\Vert \mathcal{T}^d_{n,2}(\hat{f}^d_c)  - \mathcal{T}_{n,2}(f_c)\Vert_{\infty,[0,1]} &\le 2\sum_{|\ell| \ge n} |c_{\ell}(\hat{f}_c-f_c)| + \sum_{|\ell| \ge n} |c_{\ell}(f_c)|.
\end{align*}
Thus, we have
\begin{align*}
	\Vert \mathcal{T}^d_{n,2}(\hat{f}^d_c)  - \mathcal{T}_{n,2}(f_c)\Vert_{\infty,[0,1]} &\le  \mathcal{O}\left( \frac{1}{n} \right)\Vert F_{n,p}-F \Vert_{max} + \mathcal{O}\left( \frac{1}{n^{r+1}} \right),
\end{align*}
where the second term in the last inequality follows from 
the fact that Fourier coefficients $c_k(g)$ decay as $|k|^{-(r+2)}$ for $g \in D_0^{r,1}([-1,1])$ 
while the first term results from the fact that, for $k \ne 0$, we have
\begin{align*}
&c_k(\hat{f}_c-f_c) = \frac{1}{2}\int_{-1}^0 \mathcal{P}_r(F_{n,p}-F)(x)e^{-\pi i k x}\,dx = \\ &\frac{1}{2\pi^2 k^2}\left( \sum_{m=0}^r (f_{0,m}-f^{(m)}(0)) (P_m^0)'(0) + \sum_{m=0}^r (f_{1,m}-f^{(m)}(1)) (P_m^1)'(0)\right) - \\ 
& \frac{(-1)^k}{2\pi^2 k^2}\left( \sum_{m=0}^r (f_{0,m}-f^{(m)}(0)) (P_m^0)'(-1) + \sum_{m=0}^r (f_{1,m}-f^{(m)}(1)) (P_m^1)'(-1)\right) - \\
&\frac{1}{2\pi^2 k^2}\int_{-1}^{0} \left( \sum_{m=0}^r (f_{0,m}-f^{(m)}(0)) (P_m^0)''(x) + \sum_{m=0}^r (f_{1,m}-f^{(m)}(1)) (P_m^1)''(x)\right) e^{-\pi i k x}\,dx.
\end{align*}
Finally, using the inequality
\[
\Vert \mathcal{T}^d_{n,2}(\hat{f}^d_c) - f \Vert_{\infty,[0,1]} \le \Vert \mathcal{T}^d_{n,2}(\hat{f}^d_c) - \mathcal{T}_{n,2}(f_c) \Vert_{\infty,[0,1]} + \Vert \mathcal{T}_{n,2}(f_c) - f \Vert_{\infty,[0,1]}
\]
in conjunction of \cref{thm}, we see that
\[
\Vert \mathcal{T}^d_{n,2}(\hat{f}^d_c) - f \Vert_{\infty,[0,1]} \le \mathcal{O}\left( \frac{1}{n^{p+1}} \right) + \mathcal{O}\left( \frac{1}{n^{r+1}} \right) = \mathcal{O}\left( \frac{1}{n^{\min\{p,r\}+1}} \right)
\]

\begin{table}[t]
	\begin{tabularx}{0.99\textwidth}{ | >{\setlength\hsize{0.4\hsize}\centering}X | >{\setlength\hsize{1.3\hsize}\centering}X | >{\setlength\hsize{0.9\hsize}\centering}X | >{\setlength\hsize{1.3\hsize}\centering}X | >{\setlength\hsize{0.9\hsize}\centering}X | >{\setlength\hsize{1.3\hsize}\centering}X |
			>{\setlength\hsize{0.9\hsize}\centering}X | }
		\hline
		\multirow{2}{\hsize}{$n$} &  \multicolumn{2}{>{\setlength\hsize{2.2\hsize}\centering}X |}{$r = 1$} &
		\multicolumn{2}{>{\setlength\hsize{2.2\hsize}\centering}X |}{$r = 2$} &
		\multicolumn{2}{>{\setlength\hsize{2.2\hsize}\centering}X |}{$r = 3$} \tabularnewline
		\cline{2-7}
		& $e_n$ & $e_n/e_{n-1}$ &  $e_n$  & $e_n/e_{n-1}$  & $e_n$ & $e_n/e_{n-1}$ \tabularnewline
		\hline
		$2^{6}$ & $1.17\times 10^{-3}$ & --- & $2.39\times 10^{-4}$ & --- & $1.93\times 10^{-4}$ & --- \tabularnewline
		\hline
		$2^{7}$ & $3.20\times 10^{-4}$ & $3.65$ & $1.90\times 10^{-5}$ & $12.54$ & $1.24\times 10^{-5}$ & $15.59$ \tabularnewline
		\hline
		$2^{8}$ & $8.24\times 10^{-5}$ & $3.88$ & $1.68\times 10^{-6}$ & $11.32$ & $7.85\times 10^{-7}$ & $15.78$ \tabularnewline
		\hline
		$2^{9}$ & $2.07\times 10^{-5}$ & $3.98$ & $1.73\times 10^{-7}$ & $9.73$ & $4.93\times 10^{-8}$ & $15.94$ \tabularnewline
		\hline
		$2^{10}$ & $5.20\times 10^{-6}$ & $3.99$ & $2.15\times 10^{-8}$ & $8.04$ & $3.09\times 10^{-9}$ & $15.93$ \tabularnewline
		\hline
		$2^{11}$ & $1.22\times 10^{-6}$ & $4.27$ & $2.69\times 10^{-9}$ & $7.99$ & $1.86\times 10^{-10}$ & $16.64$ \tabularnewline
		\hline
		$2^{12}$ & $2.92\times 10^{-7}$ & $4.17$ & $3.36\times 10^{-10}$ & $8.10$ & $1.16\times 10^{-11}$ & $15.98$ \tabularnewline
		\hline
	\end{tabularx}
	\caption{Convergence study for approximations of $f(x) = \sin(20x)$ using the derivative approximations of order $p = 3$ and the continuation polynomial of degree $2r+1$ for $r = 1, 2$ and $3$.}
	\label{sin_p_3}
\end{table}

\subsection{Numerical examples}

We now discuss some numerical experiments to demonstrate that theoretical convergence rates obtained above are indeed achieved in practice. Toward this, we consider the problem of approximating a function $f(x)$ on $[0,1]$ using the functional data on a uniform grid of size $n$. We record the relative approximation error $e_n$ that is obtained as
\[
e_n = \max_{0 \le j \le N} | \mathcal{T}_{n,2}^d(\hat{f}_c)(z_j)-f(z_j)|/\max_{0 \le j \le N} | f(z_j)| 
\]
where $N = 2^{13}$ and $z_j = j/N$ are the evaluations points on a large uniform grid where approximate and exact values are compared.

In the first set of experiments, we study the effect of $p$ and $r$ on the rate of convergence as $n$ increases. The results in \cref{sin_p_3} and \cref{sin_p_4} for a smooth function $f(x) = \sin(20x)$ clearly show that the numerical rate of convergence indeed matches the theoretical rate $\min\{p,r\}+1$. Moreover, as expected, the quality of approximations remains satisfactory even for highly oscillatory functions, as seen in \cref{ec_r_4_p_4}.

\begin{table} [!t]
	\begin{tabularx}{0.99\textwidth}{ | >{\setlength\hsize{0.4\hsize}\centering}X | >{\setlength\hsize{1.3\hsize}\centering}X | >{\setlength\hsize{0.9\hsize}\centering}X | >{\setlength\hsize{1.3\hsize}\centering}X | >{\setlength\hsize{0.9\hsize}\centering}X | >{\setlength\hsize{1.3\hsize}\centering}X |
			>{\setlength\hsize{0.9\hsize}\centering}X | }
		\hline
		\multirow{2}{\hsize}{$n$} &  \multicolumn{2}{>{\setlength\hsize{2.2\hsize}\centering}X |}{$r = 2$} &
		\multicolumn{2}{>{\setlength\hsize{2.2\hsize}\centering}X |}{$r = 3$} &
		\multicolumn{2}{>{\setlength\hsize{2.2\hsize}\centering}X |}{$r = 4$} \tabularnewline
		\cline{2-7}
		& $e_n$ & $e_n/e_{n-1}$ &  $e_n$  & $e_n/e_{n-1}$  & $e_n$ & $e_n/e_{n-1}$ \tabularnewline
		\hline
		$2^{6}$ & $1.42\times 10^{-4}$ & --- & $6.94\times 10^{-5}$ & --- & $4.03\times 10^{-5}$ & --- \tabularnewline
		\hline
		$2^{7}$ & $1.28\times 10^{-5}$ & $11.10$ & $2.53\times 10^{-6}$ & $27.39$ & $1.42\times 10^{-6}$ & $28.32$ \tabularnewline
		\hline
		$2^{8}$ & $1.44\times 10^{-6}$ & $8.85$ & $1.02\times 10^{-7}$ & $24.81$ & $4.59\times 10^{-8}$ & $31.04$ \tabularnewline
		\hline
		$2^{9}$ & $1.75\times 10^{-7}$ & $8.23$ & $4.64\times 10^{-9}$ & $22.02$ & $1.44\times 10^{-9}$ & $31.84$ \tabularnewline
		\hline
		$2^{10}$ & $2.16\times 10^{-8}$ & $8.13$ & $2.32\times 10^{-10}$ & $19.96$ & $4.51\times 10^{-11}$ & $31.94$ \tabularnewline
		\hline
		$2^{11}$ & $2.69\times 10^{-9}$ & $8.01$ & $1.27\times 10^{-11}$ & $18.28$ & $1.32\times 10^{-12}$ & $34.16$ \tabularnewline
		\hline
		$2^{12}$ & $3.37\times 10^{-10}$ & $8.00$ & $7.46\times 10^{-13}$ & $17.04$ & $7.67\times 10^{-14}$ & $17.21$ \tabularnewline
		\hline
	\end{tabularx}
	\caption{Convergence study for approximations of $f(x) = \sin(20x)$ using the derivative approximations of order $p = 4$ and the continuation polynomial of degree $2r+1$ for $r = 2, 3$ and $4$.}
	\label{sin_p_4}
\end{table}

\begin{table} [!b]
	\begin{tabularx}{0.99\textwidth}{ | >{\setlength\hsize{0.4\hsize}\centering}X | >{\setlength\hsize{1.3\hsize}\centering}X | >{\setlength\hsize{0.9\hsize}\centering}X | >{\setlength\hsize{1.3\hsize}\centering}X | >{\setlength\hsize{0.9\hsize}\centering}X | >{\setlength\hsize{1.3\hsize}\centering}X |
			>{\setlength\hsize{0.9\hsize}\centering}X | }
		\hline
		\multirow{2}{\hsize}{$n$} &  \multicolumn{2}{>{\setlength\hsize{2.2\hsize}\centering}X |}{$k = 50$} &
		\multicolumn{2}{>{\setlength\hsize{2.2\hsize}\centering}X |}{$k = 100$} &
		\multicolumn{2}{>{\setlength\hsize{2.2\hsize}\centering}X |}{$k = 200$} \tabularnewline
		\cline{2-7}
		& $e_n$ & $e_n/e_{n-1}$ &  $e_n$  & $e_n/e_{n-1}$  & $e_n$ & $e_n/e_{n-1}$ \tabularnewline
		\hline
		$2^{6}$ & $6.78\times 10^{-2}$ & --- & $4.55\times 10^{-1}$ & --- & $1.15\times 10^{-0}$ & --- \tabularnewline
		\hline
		$2^{7}$ & $1.02\times 10^{-3}$ & $66.28$ & $6.61\times 10^{-2}$ & $6.89$ & $4.73\times 10^{-1}$ & $2.43$ \tabularnewline
		\hline
		$2^{8}$ & $3.03\times 10^{-6}$ & $337$ & $1.32\times 10^{-3}$ & $50.07$ & $1.07\times 10^{-1}$ & $4.44$ \tabularnewline
		\hline
		$2^{9}$ & $7.21\times 10^{-8}$ & $42.08$ & $6.87\times 10^{-6}$ & $192$ & $3.94\times 10^{-3}$ & $27.07$ \tabularnewline
		\hline
		$2^{10}$ & $1.95\times 10^{-9}$ & $36.95$ & $1.42\times 10^{-7}$ & $48.44$ & $8.10\times 10^{-6}$ & $486$ \tabularnewline
		\hline
		$2^{11}$ & $5.45\times 10^{-11}$ & $35.79$ & $3.86\times 10^{-9}$ & $36.74$ & $1.12\times 10^{-7}$ & $72.48$ \tabularnewline
		\hline
		$2^{12}$ & $1.63\times 10^{-12}$ & $33.41$ & $1.14\times 10^{-10}$ & $33.86$ & $3.50\times 10^{-9}$ & $31.93$ \tabularnewline
		\hline
	\end{tabularx}
	\caption{Convergence study for approximations of $\exp(-2\cos k x)$ with $k = 50, 100$ and $200$ using the derivative approximations of order $p = 4$ and the continuation polynomial of degree $9$.}
	\label{ec_r_4_p_4}
\end{table}

\begin{table} [!t]
	\begin{tabularx}{0.99\textwidth}{ | >{\setlength\hsize{0.4\hsize}\centering}X | >{\setlength\hsize{1.3\hsize}\centering}X | >{\setlength\hsize{0.9\hsize}\centering}X | >{\setlength\hsize{1.3\hsize}\centering}X | >{\setlength\hsize{0.9\hsize}\centering}X | >{\setlength\hsize{1.3\hsize}\centering}X |
			>{\setlength\hsize{0.9\hsize}\centering}X | }
		\hline
		\multirow{2}{\hsize}{$n$} &  \multicolumn{2}{>{\setlength\hsize{2.2\hsize}\centering}X |}{$p = 1$} &
		\multicolumn{2}{>{\setlength\hsize{2.2\hsize}\centering}X |}{$p = 2$} &
		\multicolumn{2}{>{\setlength\hsize{2.2\hsize}\centering}X |}{$p = 3$} \tabularnewline
		\cline{2-7}
		& $e_n$ & $e_n/e_{n-1}$ &  $e_n$  & $e_n/e_{n-1}$  & $e_n$ & $e_n/e_{n-1}$ \tabularnewline
		\hline
		$2^{6}$ & $1.54\times 10^{-4}$ & --- & $3.20\times 10^{-6}$ & --- & $3.29\times 10^{-6}$ & --- \tabularnewline
		\hline
		$2^{7}$ & $3.88\times 10^{-5}$ & $3.97$ & $4.02\times 10^{-7}$ & $7.94$ & $4.18\times 10^{-7}$ & $7.88$ \tabularnewline
		\hline
		$2^{8}$ & $9.74\times 10^{-6}$ & $3.98$ & $5.05\times 10^{-8}$ & $7.97$ & $5.26\times 10^{-8}$ & $7.94$ \tabularnewline
		\hline
		$2^{9}$ & $2.43\times 10^{-6}$ & $4.01$ & $6.32\times 10^{-9}$ & $7.98$ & $6.59\times 10^{-9}$ & $7.98$ \tabularnewline
		\hline
		$2^{10}$ & $6.08\times 10^{-7}$ & $4.00$ & $7.81\times 10^{-10}$ & $8.10$ & $8.17\times 10^{-10}$ & $8.06$ \tabularnewline
		\hline
		$2^{11}$ & $1.46\times 10^{-7}$ & $4.16$ & $9.77\times 10^{-11}$ & $8.00$ & $1.02\times 10^{-10}$ & $7.99$ \tabularnewline
		\hline
		$2^{12}$ & $3.65\times 10^{-8}$ & $4.00$ & $1.22\times 10^{-11}$ & $8.00$ & $1.28\times 10^{-11}$ & $8.00$ \tabularnewline
		\hline
	\end{tabularx}
	\caption{Convergence study for approximations of $f(x) = |x-1/3|(x-1/3)^2$ using the continuation polynomial of degree $5$ and the derivative approximations of order $p = 1, 2$ and $3$.}
	\label{sin_r_2}
\end{table}

\begin{table} [!b]
	\begin{tabularx}{0.99\textwidth}{ | >{\setlength\hsize{0.4\hsize}\centering}X | >{\setlength\hsize{1.3\hsize}\centering}X | >{\setlength\hsize{0.9\hsize}\centering}X | >{\setlength\hsize{1.3\hsize}\centering}X | >{\setlength\hsize{0.9\hsize}\centering}X | >{\setlength\hsize{1.3\hsize}\centering}X |
			>{\setlength\hsize{0.9\hsize}\centering}X | }
		\hline
		\multirow{2}{\hsize}{$n$} &  \multicolumn{2}{>{\setlength\hsize{2.2\hsize}\centering}X |}{$p = 1$} &
		\multicolumn{2}{>{\setlength\hsize{2.2\hsize}\centering}X |}{$p = 2$} &
		\multicolumn{2}{>{\setlength\hsize{2.2\hsize}\centering}X |}{$p = 3$} \tabularnewline
		\cline{2-7}
		& $e_n$ & $e_n/e_{n-1}$ &  $e_n$  & $e_n/e_{n-1}$  & $e_n$ & $e_n/e_{n-1}$ \tabularnewline
		\hline
		$2^{6}$ & $1.56\times 10^{-4}$ & --- & $2.60\times 10^{-6}$ & --- & $8.13\times 10^{-7}$ & --- \tabularnewline
		\hline
		$2^{7}$ & $3.91\times 10^{-5}$ & $4.00$ & $3.15\times 10^{-7}$ & $8.25$ & $1.01\times 10^{-7}$ & $8.03$ \tabularnewline
		\hline
		$2^{8}$ & $9.79\times 10^{-6}$ & $4.00$ & $3.88\times 10^{-8}$ & $8.12$ & $1.27\times 10^{-8}$ & $8.01$ \tabularnewline
		\hline
		$2^{9}$ & $2.44\times 10^{-6}$ & $4.02$ & $4.79\times 10^{-9}$ & $8.09$ & $1.58\times 10^{-9}$ & $8.00$ \tabularnewline
		\hline
		$2^{10}$ & $6.09\times 10^{-7}$ & $4.00$ & $5.97\times 10^{-10}$ & $8.03$ & $1.98\times 10^{-10}$ & $8.00$ \tabularnewline
		\hline
		$2^{11}$ & $1.46\times 10^{-7}$ & $4.17$ & $7.15\times 10^{-11}$ & $8.35$ & $2.27\times 10^{-11}$ & $8.72$ \tabularnewline
		\hline
		$2^{12}$ & $3.66\times 10^{-8}$ & $4.00$ & $8.93\times 10^{-12}$ & $8.01$ & $2.84\times 10^{-12}$ & $7.99$ \tabularnewline
		\hline
	\end{tabularx}
	\caption{Convergence study for approximations of $f(x) = |x-1/3|(x-1/3)^2$ using the continuation polynomial of degree $7$ and the derivative approximations of order $p = 1, 2$ and $3$.}
	\label{sin_r_3}
\end{table}

\begin{table} [!t]
	\begin{tabularx}{0.99\textwidth}{ | >{\setlength\hsize{0.4\hsize}\centering}X | >{\setlength\hsize{1.27\hsize}\centering}X | >{\setlength\hsize{0.875\hsize}\centering}X | >{\setlength\hsize{1.27\hsize}\centering}X | >{\setlength\hsize{0.875\hsize}\centering}X | >{\setlength\hsize{1.27\hsize}\centering}X |
	>{\setlength\hsize{1.02\hsize}\centering}X | }
		\hline
		\multirow{2}{\hsize}{$n$} &  \multicolumn{2}{>{\setlength\hsize{2.2\hsize}\centering}X |}{$\epsilon = 1$} &
		\multicolumn{2}{>{\setlength\hsize{2.2\hsize}\centering}X |}{$\epsilon = 0.1$} &
		\multicolumn{2}{>{\setlength\hsize{2.2\hsize}\centering}X |}{$\epsilon = 0.01$} \tabularnewline
		\cline{2-7}
		& $e_n$ & $e_n/e_{n-1}$ &  $e_n$  & $e_n/e_{n-1}$  & $e_n$ & $e_n/e_{n-1}$ \tabularnewline
		\hline
		$2^{6}$ & $1.43\times 10^{-9}$ & --- & $1.39\times 10^{-7}$ & --- & $2.06\times 10^{-1}$ & --- \tabularnewline
		\hline
		$2^{7}$ & $4.24\times 10^{-11}$ & $33.67$ & $4.07\times 10^{-9}$ & $34.12$ & $3.02\times 10^{-2}$ & $6.81$ \tabularnewline
		\hline
		$2^{8}$ & $1.29\times 10^{-12}$ & $32.81$ & $1.21\times 10^{-10}$ & $33.53$ & $5.98\times 10^{-4}$ & $50.52$ \tabularnewline
		\hline
		$2^{9}$ & $3.99\times 10^{-14}$ & $32.43$ & $3.68\times 10^{-12}$ & $32.99$ & $1.92\times 10^{-7}$ & $3123$ \tabularnewline
		\hline
		$2^{10}$ & $9.55\times 10^{-15}$ & $4.17$ & $1.11\times 10^{-13}$ & $33.07$ & $2.97\times 10^{-14}$ & $6.46\times 10^6$ \tabularnewline
		\hline
	\end{tabularx}
	\caption{Convergence study for approximations of $f_{\epsilon}(x) = ((x-1/3)^2 + \epsilon^2)^{-1}$ with $\epsilon = 1, 0.1$ and $0.01$ using the derivative approximations of order $p = 4$ and the continuation polynomial of degree $9$.}
	\label{runge_r_4_p_4}
\end{table}

Next, we take the function $f(x) = |x-1/3|(x-1/3)^2 \in D^{2,1}([0,1])$, where the convergence rate increases as the order of derivative approximations improves, but only up to cubic convergence, as seen in \cref{sin_r_2}. The results in \cref{sin_r_3} confirm that, unlike the previous smooth cases, increasing the value of $r$ in the Fourier continuation approximation beyond $2$ does not bring additional gains in terms of convergence speed. 

Finally, we conclude this section by looking at aproximation quality of the proposed approach for $f_{\epsilon}(x) = ((x-1/3)^2+\epsilon^2)^{-1}$ on $[0,1]$
that has poles in the complex plane at $z = 1/3 \pm \epsilon i$. 
The Fourier continuation approximations used for the results in \cref{runge_r_4_p_4} correspond to the parameters $r = 4$, $p = 4$ and, therefore, are expected to converge with rate $5$ as seen in the table, particularly for $\epsilon = 1$ and $\epsilon = 0.1$. It is interesting to note that the results corresponding to $\epsilon = 0.01$ exhibit superalgebraic convergence due to the relativly ``small" boundary data compared to the peak function value $\epsilon^2$ at $x = 1/3$.

\section{Concluding remarks}

In this paper, we analyzed a Fourier approximation strategy for non-periodic functions that, to avoid Gibbs oscillations, utilizes a construction for their smooth continuation to a larger interval so that the continued function is periodic. We were able to show that such approximations indeed converge with high-order. In particular, we investigated the two-point Hermite polynomial based continuation strategy and found that they are not only simple to implement but also high-order accurate. Further, in the discrete setting where functional data is available only on an equispaced grid, this construction was utilized to obtain interpolatory trigonometric approximations that converge with high-order and has $\mathcal{O}(n\log n)$ computational complexity. Our numerical experiments validate the performance of this scheme in terms of approximation quality and that the theoretical convergence rates are attained in practice. 

While this work focussed mainly on investigating the approximation properties of Fourier continuation strategy, a future step in this direction of significant interest would be to analyze its use in PDE solvers and study the corresponding convergence rates.  

\section*{Acknowledgments}
The author gratefully acknowledges support from IITK-ISRO Space Technology Cell through contract No.  STC/MATH/2014100.


\end{document}